\numberwithin{equation}{section}
\newtheorem{thm}{Theorem}[section]
\newtheorem{lem}[thm]{Lemma}
\theoremstyle{definition}
\numberwithin{equation}{section}
\theoremstyle{definition}
\newtheorem*{ack}{Acknowledgements}
\newcommand{\lab}{\label}
\newcommand{\eps}{\varepsilon}
\newcommand{\si}{\sigma}
\newcommand{\Gm}{\Gamma}
\newcommand{\abs}[1]{\left\lvert#1\right\rvert}
\newcommand{\re}{\textup{Re}}
\newcommand{\im}{\textup{Im}}
\begin{document}

\title[Bounds for double $L$-functions]{Bounds for double $L$-functions}

\author[Y. Toma]{Yuichiro Toma}
%\date{}
\address{Graduate School of Mathematics, Nagoya University, Chikusa-ku, Nagoya 464-8602, Japan.}

\email{m20034y@math.nagoya-u.ac.jp}  

\makeatletter
\@namedef{subjclassname@2020}{\textup{2020} Mathematics Subject Classification}
\makeatother
\subjclass[2020]{11M32, 11M35}
\keywords{double $L$-function, confluent hypergeometric function}

\begin{abstract} Double $L$-functions are the generalization of Dirichlet $L$-functions to two variable functions. We investigate the order estimation of double $L$-functions, and give upper bounds which are explicit in conductor aspect.
\end{abstract}

\maketitle
%%%%%%%%%%%%%%%%%%%%%%%%%%%%%%%%%%
\section{Introduction}
For complex variables $s_1=\sigma_1+it_1, s_2=\sigma_2+it_2$, the double zeta-function is defined as
\[
\zeta_2(s_1,s_2):=\sum_{m=1}^\infty \sum_{n=1}^\infty \frac{1}{m^{s_1}(m+n)^{s_2}}
\]
for $\sigma_2>1,\sigma_1+\sigma_2>2$. This is a generalization of the Riemann zeta-function $\zeta(s) :=\sum_{n\leq 1}n^{-s}$ to a two variable function. In 1949, Atkinson \cite{A49} divided $\zeta(s_1)\zeta(s_2)$ into 
\[
\zeta(s_1)\zeta(s_2) = \zeta_2(s_1,s_2)+\zeta_2(s_2,s_1)+\zeta(s_1+s_2).
\]
Nowadays, this is called Atkinson's dissection. So it implies that the double zeta-function can be regarded as the non-diagonal term of the product of two Riemann zeta-functions. He studied $\zeta_2(s_1,s_2)$ in the research on the mean value formulas for the Riemann zeta-function. 

In this direction, the growth order of the double zeta-function has been studied. In \cite{IM03}, Ishikawa and Matsumoto first gave upper bound estimates of $\zeta_2(it,i\alpha t)$ for $\alpha \neq \pm1$. After that,  Kiuchi and Tanigawa \cite{KT06} gave upper bounds of $\zeta_2(s_1,s_2)$ for $0 \leq \sigma_1\leq 1, 0 \leq \sigma_2 \leq1$ by using double exponential sums due to Kr\"{a}tzel \cite{K88}. For example, they showed that 
\[
\zeta_2(1/2+it_1,1/2+it_2) \ll \abs{t_1}^\frac{1}{3} \log^2 \abs{t_1}.
\]
In 2011, Kiuchi, Tanigawa and Zhai \cite{KTZ11} proved that the exponent $1/3$ is best possible under a certain condition. They showed that 
\[
\zeta_2(1/2+it_1,1/2+it_2) = \Omega(\abs{t_1}^\frac{1}{3}\log\log \abs{t_1} )
\]
holds for $\abs{t_2} \ll\abs{t_1}^{\frac{1}{6}-\varepsilon}$. However, if $t_1,t_2$ do not satisfy the condition, then the exponent can be reduced. Indeed, Banerjee, Minamide and Tanigawa \cite{BMT20} refined the method of \cite{KTZ11} to obtain that
\[
\zeta_2(1/2+it_1,1/2+it_2) \ll \abs{t_1}^{\mu(\frac{1}{2})+\eps} \abs{t_2}^{\mu(\frac{1}{2})+\eps}
\]
for $\abs{t_2}^{\frac{1/2-\mu(1/2)}{1+\mu(1/2)}} \ll \abs{t_1} \ll\abs{t_2}^{\frac{1+\mu(1/2)}{1/2-\mu(1/2)}}$, where $\mu(\si) := \inf \{ c\geq 0 \mid \zeta(\si+it) \ll \abs{t}^c\}$. It is known that $\mu(\frac{1}{2})\leq \frac{13}{84}$ due to Bourgain \cite{Bo17}. Ultimately, if the Lindel\"of hypothesis is true, we can take $\mu(\frac{1}{2}) =0$.

In this paper, we investigate the upper bound estimates of double $L$-functions. Let $q$ be a positive integer and $\chi_1, \chi_2$ be Dirichlet characters with the same modulus $q \geq 2$. Then the double $L$-function associated with $\chi_1,\chi_2$ is defined as 
\begin{align}
\lab{DLF}
L_2 (s_1,s_2;\chi_1,\chi_2) &:=\sum_{m=1}^\infty \sum_{n=1}^\infty \frac{\chi_1(m) \chi_2(n)}{m^{s_1}(m+n)^{s_2}}
\end{align}
for $\sigma_2>1,\sigma_1+\sigma_2>2$.  
Arakawa and Kaneko \cite{AK04} and several authors studied its special values at positive integers which are called multiple $L$-values. 

On the other hand, from the analytic viewpoint, it is known that if $\chi_1,\chi_2$ are both primitive, then (\ref{DLF}) is entire on $\mathbb{C}^2$ space, due to Mastumoto and Tanigawa \cite{MT01}. Moreover, they studied the upper bound of (\ref{DLF}), and obtained that if $\chi_2$ is nontrivial, then for a fixed small positive number $\eta$
\begin{align}
\label{bound of MT}
L_2 (s_1,s_2;\chi_1,\chi_2) 
&\ll (1+|t_2|)^{\theta_2(\eta)+\frac{1}{2}+\max \{0, \frac{1}{2}-\sigma_2+\eta\} } %\max \{ 1+|t_2|,1+|t_1+t_2|\}^{\theta_1(\sigma_1+\sigma_2-\eta)}
\end{align}
holds in the region $\mathcal{D} =\{(s_1, s_2) \mid \sigma_1+\sigma_2>1+\eta, \sigma_2>\eta \}$, where 
\[
\theta_2(\si) := \inf \{\alpha \geq 0 \mid L(\si+it,\chi_2) \ll \abs{t}^{\alpha} \}.
\]
It should be remarked that they neglected the conductor aspect because they gave the upper bounds of not only (\ref{DLF}) but also more general double sums satisfying some fundamental axioms. In this paper, we show the following.

\begin{thm}
\lab{thm:1}
Let $(s_1,s_2) \in \mathbb{C}^2$ with $\sigma_1+\sigma_2>0, 0<\sigma_2<1$ and $\abs{t_2} \geq 2$. Suppose that $\chi_1, \chi_2$ are primitive Dirichlet characters modulo $q$. Then we have
\begin{align*}
L_2 (s_1,s_2;\chi_1,\chi_2) &\ll (q\abs{t_2})^{\frac{1}{2}+\delta+\eps} & if &\quad \chi_2(-1)=1, \\
L_2 (s_1,s_2;\chi_1,\chi_2) &\ll (q\abs{t_2})^{\frac{1}{2}+\delta+\eps} +(1+\abs{t_1+t_2})q^{\frac{3}{2}+\eps} (q\abs{t_2})^{-\min \{1,\frac{\si_1+\si_2}{2}\}} & if &\quad \chi_2(-1)=-1,
\end{align*}
where $\delta= \max \{ 0,1-\si_1-\si_2 \}$ and the implicit constants are independent of $q, t_1$ and $t_2$.
\end{thm}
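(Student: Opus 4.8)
The plan is to combine a Mellin--Barnes integral representation with the functional equation of the single Dirichlet $L$-function attached to $\chi_2$, in the spirit of Matsumoto and Tanigawa \cite{MT01} but keeping the conductor explicit. Starting from $(1+x)^{-s_2}=\frac{1}{2\pi i}\int_{(c)}\frac{\Gamma(s_2+z)\Gamma(-z)}{\Gamma(s_2)}x^{z}\,dz$ with $x=n/m$ and summing over $m,n$ in the region $\sigma_2>1,\ \sigma_1+\sigma_2>2$ of absolute convergence, I would first obtain
\[
L_2(s_1,s_2;\chi_1,\chi_2)=\frac{1}{2\pi i}\int_{(c)}\frac{\Gamma(s_2+z)\Gamma(-z)}{\Gamma(s_2)}\,L(s_1+s_2+z,\chi_1)\,L(-z,\chi_2)\,dz
\]
for a suitable $c$ with $-\sigma_2<c<-1$. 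Since $\chi_1,\chi_2$ are primitive modulo $q\ge 2$, both single $L$-functions are entire, so the only poles of the integrand come from the two gamma factors. I would then move the contour into the strip $-\sigma_2<c<0$ and check that, all the way down to the target region $\sigma_1+\sigma_2>0,\ 0<\sigma_2<1$, the pole $z=-s_2$ of $\Gamma(s_2+z)$ remains to the left of the contour and the pole $z=0$ of $\Gamma(-z)$ remains to its right; hence the representation persists with $-\sigma_2<c<0$ and no residual terms are produced by the continuation itself.

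The decisive step is to insert the functional equation
\[
L(-z,\chi_2)=\frac{\tau(\chi_2)}{i^{\mathfrak a}\sqrt q}\Big(\frac{q}{\pi}\Big)^{1/2+z}\frac{\Gamma\!\big(\tfrac{1+z+\mathfrak a}{2}\big)}{\Gamma\!\big(\tfrac{-z+\mathfrak a}{2}\big)}\,L(1+z,\bar\chi_2),
\]
where $\mathfrak a\in\{0,1\}$ is determined by $\chi_2(-1)=(-1)^{\mathfrak a}$. This isolates the Gauss-sum factor of modulus $1$ together with the factor $(q/\pi)^{1/2+z}$, which is the genuine source of the $q$-dependence. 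I would then push the contour to $\re z=c'=\max\{0,1-\sigma_1-\sigma_2\}+\eps\in(0,1)$, where both $L(1+z,\bar\chi_2)$ and $L(s_1+s_2+z,\chi_1)$ converge absolutely. Crossing $z=0$ contributes the residue $-L(s_1+s_2,\chi_1)L(0,\chi_2)$, and this is exactly where the parity of $\chi_2$ intervenes: $L(0,\chi_2)=0$ when $\chi_2$ is even and $L(0,\chi_2)\neq 0$ when $\chi_2$ is odd (equivalently, the gamma ratio above vanishes at $z=0$ precisely when $\mathfrak a=0$). This is the structural reason why the even case carries no extra term while the odd case does.

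On the shifted line I would expand $L(1+z,\bar\chi_2)=\sum_{l}\bar\chi_2(l)l^{-1-z}$ and $L(s_1+s_2+z,\chi_1)=\sum_{m}\chi_1(m)m^{-s_1-s_2-z}$, interchange, and recognise the inner integral $\frac{1}{2\pi i}\int_{(c')}\Gamma(s_2+z)\Gamma(-z)\frac{\Gamma((1+z+\mathfrak a)/2)}{\Gamma((-z+\mathfrak a)/2)}X^{z}\,dz$, with $X=q/(\pi lm)$, as a confluent hypergeometric function $\Phi(s_2;X)$. This rewrites $L_2$ as the (parity-dependent) residue plus a main term of the shape $C\sqrt q\sum_{l,m}\frac{\bar\chi_2(l)\chi_1(m)}{l\,m^{s_1+s_2}}\Phi\big(s_2;q/(\pi lm)\big)$. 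A pleasant feature is that the variable $t_1$ now enters only through $\abs{m^{-s_1-s_2}}=m^{-\sigma_1-\sigma_2}$, which is independent of $t_1$, and through the residue; this already explains the absence of any power of $\abs{t_1}$ from the main bound. The choice $c'=\delta+\eps$ supplies the factor $q^{1/2+\delta}$, while the remaining factor $\abs{t_2}^{1/2+\delta}$ and the convergence of the double sum should come from uniform bounds for $\Phi$ in the $s_2$- and $q$-aspects.

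The hard part will be the $t_2$-aspect of these bounds. A term-by-term estimate of the Mellin--Barnes integrand overshoots by a full factor $\abs{t_2}^{1/2}$: this is visible already from the exact identity $\frac{1}{2\pi i}\int_{(c)}\frac{\Gamma(s_2+z)\Gamma(-z)}{\Gamma(s_2)}X^{z}\,dz=(1+X)^{-s_2}$, whose left-hand side, bounded by absolute values over the transition range $\abs{\im z}\lesssim\abs{t_2}$ where the gamma factors do not decay, would appear to grow like $\abs{t_2}^{1/2}$, whereas the right-hand side is bounded. Capturing this cancellation, i.e. establishing a bound for $\Phi(s_2;X)$ that is uniform in $t_2$ and retains the exponential decay of the gamma factors, is the technical heart of the argument and is precisely where the analytic properties of confluent hypergeometric functions are indispensable. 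Once the main term is controlled, I would finish the odd case by estimating the residue: using $\abs{L(0,\chi_2)}\ll\sqrt q\,\log q$ from the functional equation together with the convexity bound for $L(s_1+s_2,\chi_1)$, and tracking the conductor and the exponent $\min\{1,(\sigma_1+\sigma_2)/2\}$ through the computation, the residual contribution is dominated by the stated additional term $(1+\abs{t_1+t_2})q^{3/2+\eps}(q\abs{t_2})^{-\min\{1,(\sigma_1+\sigma_2)/2\}}$.
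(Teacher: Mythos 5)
Your set-up is sound and, perhaps surprisingly, lands very close to where the paper actually starts: the Mellin--Barnes representation, the functional equation for $L(-z,\chi_2)$, and the contour shift to $\mathrm{Re}\,z=\delta+\varepsilon$ produce essentially the same object as the paper's Lemmas \ref{lem:L and conf hypergeometric} and \ref{lem:L and conf hypergeometric-2} (the paper reaches it through the interpolated Mordell--Tornheim series $\widetilde{L}_{2,z}$ and the function $\Psi(a,c;x)$, but the resulting double sum weighted by a confluent hypergeometric function of argument $\pm 2\pi imn/q$ is the same, and your observation that the parity of $\chi_2$ governs the presence of an extra term is exactly the mechanism in the paper). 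The problem is that you stop precisely at the point where the theorem is actually proved. You acknowledge that a term-by-term estimate of $\Phi(s_2;X)$ over the transition range $\lvert\mathrm{Im}\,z\rvert\asymp\lvert t_2\rvert$ loses a factor $\lvert t_2\rvert^{1/2}$ and that recovering this cancellation ``is the technical heart of the argument,'' but you do not supply any such bound. Without it the main term is only $O\bigl((q\lvert t_2\rvert)^{1+\delta+\varepsilon}\bigr)$ rather than $O\bigl((q\lvert t_2\rvert)^{1/2+\delta+\varepsilon}\bigr)$, so the theorem is not proved. The paper closes exactly this gap by rewriting $\Psi(1,2-s_2;\pm 2\pi imn/q)$ as incomplete-gamma integrals $\int_{2\pi mn/q}^{\infty}x^{-s_2}\cos x\,dx$, $\int_{2\pi mn/q}^{\infty}x^{-s_2}\sin x\,dx$, splitting the double sum at the transition point $mn\approx q\lvert t_2\rvert/2\pi$ into five ranges $J_1,\dots,J_5$, and applying the Hardy--Littlewood estimates (\ref{HL-1})--(\ref{HL-5}); this is where the saving of $\lvert t_2\rvert^{1/2}$ and the finite main sum of Theorem \ref{thm:2} come from. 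Nothing in your proposal substitutes for this step.

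A secondary, smaller issue concerns the odd case. In the paper the extra term does not come from an isolated residue but from the non-oscillating part of the tail $J_5$, namely $\sum_{mn>A_1q t_2/2\pi}\chi_1(m)\overline{\chi_2}(n)m^{-s_1-s_2}n^{-1}$, and obtaining the decay $(q\lvert t_2\rvert)^{-\min\{1,(\sigma_1+\sigma_2)/2\}}$ requires the hyperbola-method approximation of $L(z_1,\chi_1)L(z_2,\chi_2)$ with explicit conductor dependence via P\'{o}lya--Vinogradov (Lemma \ref{approximation formula}); this is a genuinely new ingredient of the paper, not a routine convexity estimate. Your residue $L(s_1+s_2,\chi_1)L(0,\chi_2)$ is the same quantity in disguise (via the functional equation $L(0,\chi_2)\asymp\sqrt{q}\,L(1,\overline{\chi_2})$), and a convexity bound for it does turn out to be dominated by the sum of the two terms in the statement, but that verification is itself a small case analysis you assert rather than carry out, and it yields a bound of a different shape (no decay in $\lvert t_2\rvert$, exponent $(1-\sigma_1-\sigma_2)/2$ in $\lvert t_1+t_2\rvert$), so ``the residual contribution is dominated by the stated additional term'' is not literally correct as written.
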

When $\chi_1,\chi_2$ are both primitive, comparing Theorem \ref{thm:1} with (\ref{bound of MT}), we succeed in extending the region where the order estimates hold to $\si_1+\si_2>0$ if $0<\si_2<1$. Moreover, when $0<\si_2<1$, our bounds are better than (\ref{bound of MT}) in $\mathcal{D}$, even including the conductor aspect when $q \ll \abs{t_2}^{2\theta_2(\eta)-\eps}$ if $\chi_2(-1)=1$, and when $q \ll \abs{t_2}^{\theta_2(\eta)-\eps}/\abs{t_1+t_2}$ if $\chi_2(-1)=-1$.

In order to obtain the above upper bounds, we prove more precisely the following.
\begin{thm}
\lab{thm:2}
Let $(s_1,s_2) \in \mathbb{C}^2$ with $\sigma_1+\sigma_2>0$ and $0<\sigma_2<1$. Suppose that $\chi_1, \chi_2$ are primitive Dirichlet characters modulo $q$. Then 
\begin{align*}
L_2 (s_1,s_2;\chi_1,\chi_2) 
&=\frac{2(2\pi)^{s_2-1}\tau(\chi_2)\Gm(1-s_2)}{q^{s_2}}\sum_{mn\leq \frac{q|t_2|}{2\pi}} \frac{\chi_1(m) \overline{\chi_2}(n) \sin(\frac{\pi s_2}{2}+\frac{2\pi mn}{q})}{m^{s_1}n^{1-s_2}} \\
&\quad+O\left(q^{\frac{1}{2}} (q|t_2|)^{\delta+\eps}\right)
\end{align*}
holds if $\chi_2(-1)=1$, and 
\begin{align*}
L_2 (s_1,s_2;\chi_1,\chi_2) 
&=\frac{2(2\pi)^{s_2-1}\tau(\chi_2)\Gm(1-s_2)}{iq^{s_2}}\sum_{mn\leq \frac{q|t_2|}{2\pi}} \frac{\chi_1(m) \overline{\chi_2}(n) \cos(\frac{\pi s_2}{2}+\frac{2\pi mn}{q})}{m^{s_1}n^{1-s_2}} \\
&\quad+O\left((1+\abs{t_1+t_2})q^{\frac{3}{2}+\varepsilon}(q|t_2|)^{-\min \{1,\frac{\sigma_1+\sigma_2}{2} \}+\varepsilon }\right)+O\left(q^{\frac{1}{2}} (q|t_2|)^{\delta+\eps}\right)
\end{align*}
holds if $\chi_1(-1)=-1$, where $\tau(\chi_2)$ is the Gauss sum, $\delta =\max \{0,1-\sigma_1-\sigma_2 \} $ and implicit constants are
independent of $q, t_1$ and $t_2$.
\end{thm}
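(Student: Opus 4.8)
The plan is to peel off the outer sum and reduce everything to an effective functional equation for the inner Dirichlet series in $s_2$. Writing $L_2(s_1,s_2;\chi_1,\chi_2)=\sum_{m=1}^\infty \chi_1(m)m^{-s_1}\,S_m$, where $S_m=\sum_{n=1}^\infty \chi_2(n)(m+n)^{-s_2}$, I would first split the inner sum into residue classes modulo $q$ to obtain $S_m=q^{-s_2}\sum_{r\bmod q}\chi_2(r)\,\zeta(s_2,(r+m)/q)$ in terms of the Hurwitz zeta function. Inserting Hurwitz's formula and collapsing the sum over $r$ via the Gauss-sum identity $\sum_{r\bmod q}\chi_2(r)e^{2\pi ikr/q}=\overline{\chi_2}(k)\tau(\chi_2)$ then produces, at least formally, exactly the prefactor $2(2\pi)^{s_2-1}q^{-s_2}\tau(\chi_2)\Gamma(1-s_2)$, the dual series $\sum_n \overline{\chi_2}(n)\,n^{s_2-1}$, and the parity-dependent trigonometric kernel $\sin(\tfrac{\pi s_2}{2}+\tfrac{2\pi mn}{q})$ for even $\chi_2$ (respectively $\cos$ for odd $\chi_2$), the phase $2\pi mn/q$ arising from the surviving factor $e^{2\pi inm/q}$.

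This formal identity, however, is valid only where the double series converges absolutely, whereas in the target region $0<\sigma_2<1$ the dual $n$-series converges merely conditionally and the computation yields an infinite double sum rather than the truncation $mn\le q|t_2|/(2\pi)$. The device that makes the step rigorous --- and the reason confluent hypergeometric functions appear --- is to replace Hurwitz's formula by an exact integral representation with an explicit remainder. Concretely I would start from $\Gamma(s_2)(m+n)^{-s_2}=\int_0^\infty x^{s_2-1}e^{-(m+n)x}\,dx$, sum the resulting geometric series in $n$ after the Gauss-sum decomposition of $\chi_2$, and rotate the contour onto the imaginary axis. The residues at the poles $x=2\pi i(a+q\ell)/q$ reconstitute the main series above, while the rotated integral is a confluent hypergeometric remainder. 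Its large-$|t_2|$ asymptotics produce a sharp transition at $mn\approx q|t_2|/(2\pi)$: the terms with $mn\le q|t_2|/(2\pi)$ survive as the stated main sum and everything beyond is pushed into the error.

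The remaining work is estimation, and I expect the error to split into three pieces. The tail $mn>q|t_2|/(2\pi)$ of the main series I would control by partial summation, using P\'olya--Vinogradov type bounds for the character sums together with convexity estimates for the Dirichlet $L$-functions involved; this is where the factor $(q|t_2|)^{\delta}$, $\delta=\max\{0,1-\sigma_1-\sigma_2\}$, must be introduced to cover the range $\sigma_1+\sigma_2<1$ in which the defining $m$-series itself diverges. The confluent hypergeometric remainder contributes, after summation over $m$ near the transition, the term $q^{1/2}(q|t_2|)^{\delta+\eps}$, the factor $q^{1/2}$ being exactly $|\tau(\chi_2)|$. Finally, when $\chi_2$ is odd an additional contribution appears that has no analogue in the even case (the two parities producing $\cos$ versus $\sin$ kernels, which behave differently under the estimates); upon evaluation it behaves like $(1+|t_1+t_2|)\,q^{3/2+\eps}(q|t_2|)^{-\min\{1,(\sigma_1+\sigma_2)/2\}}$, the dependence on $s_1+s_2=(\sigma_1+\sigma_2)+i(t_1+t_2)$ entering through the value of the $\chi_1$-series at the shifted argument.

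The hardest part will be the uniform asymptotic analysis of the confluent hypergeometric remainder, simultaneously in $q$, $\sigma_2$, $t_1$ and $t_2$: one must locate the transition at $mn=q|t_2|/(2\pi)$ precisely and prove that the remainder beyond it is genuinely of lower order, with the correct exponents. Because the whole point is to keep the conductor $q$ explicit --- the feature missing from the estimate of Matsumoto and Tanigawa --- every appeal to a character-sum bound and to Stirling's formula for the Gamma factors must be tracked so that no spurious power of $q$ is absorbed into the implied constant.
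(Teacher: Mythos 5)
Your overall route is essentially the paper's: an exact representation of $L_2$ as a double sum of incomplete-gamma (equivalently, confluent hypergeometric) factors attached to $\overline{\chi_2}$ with phases $2\pi mn/q$, followed by uniform asymptotics of $\int_{\xi}^{\infty}u^{-s_2}\cos u\,du$ and $\int_{\xi}^{\infty}u^{-s_2}\sin u\,du$ producing the transition at $mn=q|t_2|/(2\pi)$; whether one reaches that representation through Hurwitz's formula and contour rotation or, as the paper does, through the interpolated series $\widetilde{L}_{2,z}$ is largely bookkeeping. The genuine gap is in your treatment of the range $mn>q|t_2|/(2\pi)$ for odd $\chi_2$. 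The claim that ``everything beyond is pushed into the error'' fails there, and the reason is not that the $\sin$ and $\cos$ kernels ``behave differently under the estimates'': writing $\theta=2\pi mn/q$, the leading asymptotics of the two integrals are $-\theta^{-s_2}\sin\theta$ and $\theta^{-s_2}\cos\theta$, which cancel exactly against the even-parity combination but add to $\theta^{-s_2}(\sin^2\theta+\cos^2\theta)=\theta^{-s_2}$ against the odd one. What survives is the non-oscillating series $\sum_{mn>A_1q|t_2|/(2\pi)}\chi_1(m)\overline{\chi_2}(n)m^{-s_1-s_2}n^{-1}$, whose $n$-exponent is exactly $1$: it is not absolutely convergent and cannot be dispatched by one-variable partial summation plus P\'olya--Vinogradov. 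One needs an effective hyperbola-method comparison of $L(s_1+s_2,\chi_1)L(1,\overline{\chi_2})$ with the truncated sum $\sum_{mn\le\tau}$, uniform and explicit in $q$, $\tau$ and $|1-s_1-s_2|$; this is Lemma~\ref{approximation formula}, which in turn rests on the bivariate bound of Lemma~\ref{hyperbola character sum} for $\sum_{mn\le\tau,\ m\le\xi}\chi_1(m)\chi_2(n)$, and it is precisely what produces the term $(1+|t_1+t_2|)q^{3/2+\eps}(q|t_2|)^{-\min\{1,(\sigma_1+\sigma_2)/2\}+\eps}$. Your sketch names the answer but supplies no mechanism for obtaining it.

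A second, smaller omission: the validity of the transformed double-sum representation throughout $\sigma_1+\sigma_2>0$, $0<\sigma_2<1$ is not automatic, since with only the leading term of the remainder the series behaves like $\sum_{m,n}m^{-\sigma_1-\sigma_2}n^{-1}$ times oscillation and is nowhere absolutely convergent in the target region. The paper secures convergence, and the identification with the analytic continuation of $L_2$, by expanding $\Psi(a,c;x)$ to order $N\ge 2$, so that the double sum splits into finitely many products $L(s_1+s_2+k,\chi_1)L(k+1,\overline{\chi_2})$ plus an absolutely convergent tail; some such higher-order expansion must be built into your contour-rotation derivation before truncating at the transition.
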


Since $2(2\pi)^{s-1}\Gm(1-s)\sin (\pi s/2), 2(2\pi)^{s-1}\Gm(1-s)\cos (\pi s/2)
\asymp t^{\frac{1}{2}-\sigma}$, we can find that 
\begin{align*}
\lab{upper bound}
&\frac{2(2\pi)^{s_2-1}\tau(\chi_2)\Gm(1-s_2)}{q^{s_2}} \sum_{mn\leq \frac{q|t_2|}{2\pi}} \frac{\chi_1(m) \overline{\chi_2}(n)}{m^{s_1}n^{1-s_2}} \left\{
\begin{array}{ll}
\sin\frac{\pi s_2}{2} \\
\cos\frac{\pi s_2}{2} 
\end{array}
\right.
\ll (q|t_2|)^{\frac{1}{2}+\delta+\eps}.
\end{align*}
Hence Theorem \ref{thm:2} implies Theorem \ref{thm:1}. Also, Theorem \ref{thm:2} implies that the asymptotic behaviour of double $L$-functions changes according to the parity of $\chi_2$.

Theorem \ref{thm:2} generalizes Theorem 2 of \cite{KTZ11} if $\chi_2$ is even. In the case, we can prove Theorem \ref{thm:2} by the same argument as in Theorem 2 of \cite{KTZ11}. However, in the case that $\chi_2$ is odd, an extra term remains. So we need Lemma \ref{approximation formula} to evaluate the extra term (see (\ref{J_5 chi_2:odd})). 

Moreover, in \cite{KTZ11}, they used an expression of the double zeta-function in terms of Kummer's confluent hypergeometric function ${}_1F_1(a,c;x)%= \sum_{n=0}^\infty \frac{(a)_n}{(c)_n}\frac{x^n}{n!}
$ (see \cite[Remark 1]{KTZ11}). On the other hand, in the present paper we use different confluent hypergeometric function which is defined as
\begin{equation}
\label{confluent hypergeometric function}
\Psi(a,c;x) := \frac{\Gm(1-c)}{\Gm(a-c+1)} {}_1F_1(a,c;x)+\frac{\Gm(c-1)}{\Gm(a)}x^{1-c} {}_1F_1(a-c+1,2-c:x).
\end{equation}
In order to obtain an expression of (\ref{DLF}) in terms of (\ref{confluent hypergeometric function}), we use the following interpolated double $L$-function as
\begin{align}
\lab{MT-MLF}
\widetilde{L}_{2,z} (s_1,s_2;\chi_1,\chi_2) &=\sum_{m,n=1}^\infty \frac{\chi_1(m) \chi_2(n)}{m^{s_1} n^z (m+n)^{s_2}}
\end{align}
for $z \in \mathbb{C}$, instead of (\ref{DLF}). This double $L$-series was introduced by Wu in his unpublished master thesis \cite{Wu03}, and it is called the Mordell-Tornheim double $L$-function. Wu showed that if neither characters $\chi_1$ nor $\chi_2$ are principal, then $\widetilde{L}_{2,z} (s_1,s_2;\chi_1,\chi_2)$ is entire. Also, it is easy to find that 
\[
\widetilde{L}_{2,0}(s_1,s_2;\chi_1,\chi_2)=L_2 (s_1,s_2;\chi_1,\chi_2).
\]
By using this relation we obtain an integral expression of (\ref{DLF}) in Lemma \ref{lem:L and conf hypergeometric-2}.

\section{Preliminaries}
In this section, we prove some lemmas. The first two lemmas are necessary to prove Theorem \ref{thm:2} when $\chi_2(-1)=-1$.
\begin{lem}
\label{hyperbola character sum}
Let $\chi_1,\chi_2$ be non-principal Dirichlet characters modulo $q>1$. Let $\tau, \xi$ be a large parameter such that $\tau \geq \xi$. Then we have
\begin{align*}
\sum_{\substack{mn \leq \tau \\ m \leq \xi}} \chi_1(m)\chi_2(n) &\ll \xi q^\frac{1}{2}\log q. 
\end{align*}
\end{lem}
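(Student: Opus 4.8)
The plan is to break the hyperbola region by fixing $m$ and summing first over $n$, so that the two-dimensional sum collapses to an outer sum over $m \le \xi$ of one-dimensional character sums. Writing
\[
\sum_{\substack{mn \leq \tau \\ m \leq \xi}} \chi_1(m)\chi_2(n)
= \sum_{m \leq \xi} \chi_1(m) \sum_{n \leq \tau/m} \chi_2(n),
\]
I reduce everything to controlling the inner sum $\sum_{n \leq \tau/m} \chi_2(n)$ for each fixed $m$. The point of this rearrangement is that the inner sum is an initial segment of values of the single non-principal character $\chi_2$ modulo $q$, and its length $\tau/m$ plays no role in the bound I will use.

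The key tool is the P\'olya--Vinogradov inequality: for a non-principal character $\chi_2$ modulo $q$ one has $\sum_{n \leq N} \chi_2(n) \ll q^{1/2}\log q$, with an absolute implicit constant that is \emph{uniform in the cutoff} $N$. Applying this with $N = \tau/m$ gives, for every $m \le \xi$, the estimate $\bigl\lvert \sum_{n \leq \tau/m} \chi_2(n) \bigr\rvert \ll q^{1/2}\log q$, independently of $m$. Since $\lvert \chi_1(m) \rvert \leq 1$, I then bound the outer sum trivially:
\[
\left\lvert \sum_{m \leq \xi} \chi_1(m) \sum_{n \leq \tau/m} \chi_2(n) \right\rvert
\leq \sum_{m \leq \xi} \left\lvert \sum_{n \leq \tau/m} \chi_2(n) \right\rvert
\ll \xi\, q^{1/2}\log q,
\]
which is exactly the claimed bound. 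Here the hypotheses enter in a mild way: non-principality of $\chi_2$ is what makes P\'olya--Vinogradov applicable, while the condition $\tau \geq \xi$ merely guarantees that the region of summation is genuinely a hyperbola (so that $\tau/m \geq \tau/\xi \geq 1$ for all $m \le \xi$) rather than a full box, though the estimate itself would survive even without it.

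There is essentially no serious obstacle in this argument; the only conceptual step is recognizing that the uniformity of P\'olya--Vinogradov in the upper limit lets me decouple the $n$-sum from $m$ entirely, so that the $m$-sum contributes only the factor $\xi$ coming from the number of admissible $m$. I would emphasize that no cancellation in $m$ is exploited—the bound is obtained by summing the worst-case estimate for the inner sum—which is why $\chi_1$ contributes nothing beyond $\lvert \chi_1(m)\rvert \le 1$ and the hypothesis that $\chi_1$ be non-principal is in fact not needed for this particular lemma.
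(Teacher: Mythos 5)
Your argument is exactly the paper's proof: rearrange the hyperbola sum as $\sum_{m\le\xi}\chi_1(m)\sum_{n\le\tau/m}\chi_2(n)$, apply P\'olya--Vinogradov uniformly in the cutoff to the inner sum, and bound the outer sum trivially by $\xi$. Your added remarks (that non-principality of $\chi_1$ is not actually used, and that $\tau\ge\xi$ is inessential) are correct observations beyond what the paper states.
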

\begin{proof}
By arranging terms and using the P\'{o}lya--Vinogradov inequality, we have
\[
\sum_{m \leq \xi} \chi_1(m) \sum_{n \leq \frac{\tau}{m}} \chi_2(n) \ll \sum_{m \leq \xi} q^\frac{1}{2} \log q \ll \xi q^\frac{1}{2} \log q .
\]
\end{proof}

\begin{lem}
\label{approximation formula}
Let $\chi_1,\chi_2$ be non-principal Dirichlet characters modulo $q>1$. Let $z_1, z_2$ be complex variables and $\tau >0$ be a large parameter. Then for $\re(z_1)>0,\re(z_2)>0$ and $\re(z_1)+\re(z_2)>1$, we have
\begin{align*}
L(z_1,\chi_1) L(z_2,\chi_2) &= \sum_{mn\leq \tau} \frac{\chi_1(m)\chi_2(n)}{m^{z_1}n^{z_2}} \\
&+O\left(\left(1+\abs{z_1}+\abs{z_2}+\abs{z_1z_2}\right)q(\log q)^2 \tau^{-\min\{\re(z_1),\re(z_2)\}}\right) \\
&+\begin{cases}
    O\left(\left( 1+\abs{z_1-z_2}\right)q^\frac{1}{2}(\log q)\tau^{-x_2} \right) & (\re(z_1-z_2)>1) \\
    O\left(\left( 1+\abs{z_1-z_2}\right)q^\frac{1}{2}(\log q)\tau^{-x_2}\log \tau\right) & (\re(z_1-z_2)=1) \\
    O\left(\left( 1+\abs{z_1-z_2}\right)q^\frac{1}{2}(\log q)\tau^{\frac{1-x_1-x_2}{2}}\right) & (\re(z_1-z_2)<1). \\
\end{cases}
\end{align*}
\end{lem}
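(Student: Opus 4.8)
The plan is to obtain the formula by Dirichlet's hyperbola method, splitting the tail of the double series at the diagonal $m=n=\sqrt\tau$. Throughout put $x_j=\re(z_j)$. First I would record the algebraic identity coming from this split. Writing $a_m=\chi_1(m)m^{-z_1}$, $b_n=\chi_2(n)n^{-z_2}$ and, for non-principal $\chi_j$, the conditionally convergent tails $A^*(x)=\sum_{m>x}a_m$, $B^*(x)=\sum_{n>x}b_n$, a computation with finite truncations followed by the limit gives
\[
L(z_1,\chi_1)L(z_2,\chi_2)=\sum_{mn\le\tau}\frac{\chi_1(m)\chi_2(n)}{m^{z_1}n^{z_2}}+E_1+E_2+E_3,
\]
where $E_1=\sum_{m\le\sqrt\tau}a_mB^*(\tau/m)$, $E_2=\sum_{n\le\sqrt\tau}b_nA^*(\tau/n)$ and $E_3=A^*(\sqrt\tau)B^*(\sqrt\tau)$; these are exactly the three pieces of $\sum_{mn>\tau}a_mb_n$ with $m\le\sqrt\tau$, with $n\le\sqrt\tau<m$, and with both variables exceeding $\sqrt\tau$. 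The task is then to match $E_3$ to the first $O$-term and $E_1,E_2$ to the case-dependent one.

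For $E_3$ I would bound each tail by partial summation against the P\'olya--Vinogradov inequality, in the form $|\sum_{n>x}\chi(n)n^{-z}|\ll(1+|z|)q^{1/2}(\log q)x^{-\re z}$. Taking $x=\sqrt\tau$ and multiplying the two estimates gives $E_3\ll(1+|z_1|)(1+|z_2|)q(\log q)^2\tau^{-(x_1+x_2)/2}$, which is absorbed into the first error term since $(1+|z_1|)(1+|z_2|)=1+|z_1|+|z_2|+|z_1z_2|$ and $\tau^{-(x_1+x_2)/2}\le\tau^{-\min\{x_1,x_2\}}$.

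The heart of the matter is $E_1$ (and, symmetrically, $E_2$). The key idea is to factor the summand along the hyperbola as $m^{-z_1}n^{-z_2}=(mn)^{-z_2}m^{-(z_1-z_2)}$, so that on a level set $mn=t$ the only variation is the factor $m^{-(z_1-z_2)}$; this is the structural reason the estimate depends on $z_1-z_2$ rather than on $z_1,z_2$ individually. Expanding $B^*(\tau/m)$ by partial summation and keeping the boundary term isolates the principal contribution
\[
-\,\tau^{-z_2}\sum_{\substack{mn\le\tau\\ m\le\sqrt\tau}}\chi_1(m)\chi_2(n)\,m^{-(z_1-z_2)},
\]
and here I would invoke Lemma \ref{hyperbola character sum}: the partial sums $\sum_{mn\le\tau,\,m\le\xi}\chi_1(m)\chi_2(n)$ are $\ll\xi q^{1/2}\log q$, so a further partial summation in $\xi$ against the weight $\xi^{-(z_1-z_2)}$ yields a boundary term of size $\xi^{1-(x_1-x_2)}q^{1/2}\log q$ and an integral $|z_1-z_2|q^{1/2}(\log q)\int_1^{\sqrt\tau}\xi^{-(x_1-x_2)}d\xi$. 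The integral is $O(1)$, $O(\log\tau)$ or $O(\tau^{(1-(x_1-x_2))/2})$ according as $\re(z_1-z_2)>1$, $=1$ or $<1$; combined with the factor $\tau^{-x_2}$ this reproduces exactly $\tau^{-x_2}$, $\tau^{-x_2}\log\tau$ and $\tau^{(1-x_1-x_2)/2}$, each carrying the clean factor $(1+|z_1-z_2|)q^{1/2}\log q$. For $E_2$ I would run the same argument after interchanging $(m,\chi_1,z_1)$ with $(n,\chi_2,z_2)$ and using the symmetric form of Lemma \ref{hyperbola character sum}.

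The main obstacle is obtaining the refined factor $1+|z_1-z_2|$ rather than the crude $1+|z_1|+|z_2|$: estimating the tail $B^*(\tau/m)$ in isolation costs a full $1+|z_2|$, and this loss is unavoidable unless one exploits the cancellation of the combined character sum $\chi_1(m)\chi_2(n)$ over the hyperbolic region --- precisely what Lemma \ref{hyperbola character sum} supplies, and precisely why it is needed only here. The delicate bookkeeping is to show that, once the principal boundary term has been extracted, the residual part of $E_1$ --- the integral left over from the partial summation in the radial variable $t=mn$ --- is dominated by the same coupled bound and folds into the stated $O$-terms; subdominant factors depending only on $x_1,x_2$ --- for instance $1/(1-(x_1-x_2))$ near the threshold $\re(z_1-z_2)=1$, or $1/x_j$ arising from the radial integration --- are harmless and are absorbed into the implied constants, which are allowed to depend on $\re(z_1)$ and $\re(z_2)$.
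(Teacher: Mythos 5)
Your overall strategy is the same as the paper's: split the tail $\sum_{mn>\tau}$ by the hyperbola method, dispose of the pieces where a variable exceeds $\sqrt\tau$ on its own by P\'olya--Vinogradov and partial summation, and handle the principal boundary term
$-\tau^{-z_2}\sum_{mn\le\tau,\ m\le\sqrt\tau}\chi_1(m)\chi_2(n)m^{-(z_1-z_2)}$
by a further partial summation in $m$ against Lemma \ref{hyperbola character sum} (this is exactly the paper's $\Sigma_{122,\tau}$, and your treatment of it, including the factorization $(mn)^{-z_2}m^{-(z_1-z_2)}$ and the resulting three cases, is correct). The gap is precisely in the step you defer to ``delicate bookkeeping'': the residual term
\begin{equation*}
R \;=\; z_2\sum_{m\le\sqrt\tau}\frac{\chi_1(m)}{m^{z_1}}\int_{\tau/m}^{\infty}u^{-z_2-1}\Bigl(\sum_{n\le u}\chi_2(n)\Bigr)\,du
\end{equation*}
left over from the partial summation of $B^*(\tau/m)$ does \emph{not} fold into the stated $O$-terms if estimated directly. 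Bounding the inner character sum by $q^{1/2}\log q$ and summing $m^{-x_1}(\tau/m)^{-x_2}$ with absolute values gives, in the generic case $x_1-x_2<1$, a contribution of size $\abs{z_2}\,q^{1/2}(\log q)\,\tau^{(1-x_1-x_2)/2}$. The coefficient $\abs{z_2}$ is not $O(1+\abs{z_1-z_2})$, and the exponent $(1-x_1-x_2)/2$ is not $\le-\min\{x_1,x_2\}$ unless $\abs{x_1-x_2}\ge 1$; taking $z_1=z_2=\tfrac35+iT$ with $T$ large and $\tau\gg T^2$, this term exceeds the sum of both claimed error terms.

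The missing idea --- the content of the paper's treatment of its $\Sigma_{123,\tau}$ --- is that one must exploit the cancellation of $\chi_1(m)$ in $R$ as well, and since the lower limit of the $u$-integral depends on $m$ this forces a \emph{second} partial summation over $m$ followed by an interchange of the $m$-sum and the $u$-integral. The condition $u>\tau/m$ then reappears as $m>\tau/u$ inside the integral, the inner object becomes the coupled sum $\sum_{\tau/u<m\le v}\chi_1(m)\sum_{n\le u}\chi_2(n)\ll q(\log q)^2$, and the integration range $u\ge\tau/v\ge\sqrt\tau$ supplies the decisive extra decay: the term lands at $\abs{z_1z_2}\,q(\log q)^2\,\tau^{-\min\{x_2,\,(x_1+x_2)/2\}}\log\tau$, which sits inside the first error term. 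Without this step your argument proves a strictly weaker bound. A secondary point: when you ``run the same argument'' for $E_2$, the cases come out in terms of $x_2-x_1$ with exponent $\tau^{-x_1}$ rather than $\tau^{-x_2}$, and in the regime $x_2-x_1>1$ this exceeds the stated $\tau^{(1-x_1-x_2)/2}$; you should record these symmetric cases explicitly (the paper's own appeal to symmetry quietly omits them, and its application of the lemma with $z_1=1$, $z_2=s_1+s_2$ in fact uses them).
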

\begin{proof}
Let $z_1=x_1+iy_1, z_2=x_2+iy_2$. First we assume that $x_1>1$ and $x_2>1$. Then we divide the double sum as:
\begin{align*}
L(z_1,\chi_1) L(z_2,\chi_2) &= \sum_{mn\leq \tau} \frac{\chi_1(m)\chi_2(n)}{m^{z_1}n^{z_2}} +\sum_{mn> \tau} \frac{\chi_1(m)\chi_2(n)}{m^{z_1}n^{z_2}} \\
&= \sum_{mn\leq \tau} \frac{\chi_1(m)\chi_2(n)}{m^{z_1}n^{z_2}}+\left(\sum_{\substack{mn> \tau \\ m,n\leq \tau}} +\sum_{\substack{m \leq \tau\\ n> \tau}}+\sum_{\substack{m > \tau\\ n\leq \tau}}+\sum_{m,n>\tau} \right) \frac{\chi_1(m)\chi_2(n)}{m^{z_1}n^{z_2}}\\
&= \sum_{mn\leq \tau} \frac{\chi_1(m)\chi_2(n)}{m^{z_1}n^{z_2}}+\left(\Sigma_{1,\tau}+\Sigma_{2,\tau}+\Sigma_{3,\tau}+\Sigma_{4,\tau} \right),
\end{align*}
say, where $\Sigma_{j,\tau}=\Sigma_{j,\tau}(z_1,z_2,\chi_1,\chi_2)$ for $j =1,2,3,4$. By partial summation and the P\'{o}lya--Vinogradov inequality, we find that 
\begin{align*}
\Sigma_{2,\tau},\Sigma_{3,\tau} &\ll (1+|z_1|+|z_2|+|z_1z_2|) q(\log q)^2\tau^{-\min\{x_1,x_2\}}, \\
\Sigma_{4,\tau} &\ll (1+|z_1|+|z_2|+|z_1z_2|) q(\log q)^2\tau^{-x_1-x_2}
\end{align*}
uniformly for $x_1>0, x_2>0$, respectively. 

As for $\Sigma_{1,\tau}$, we again divide $\Sigma_{1,\tau}$ into
\begin{align*}
\Sigma_{1,\tau} &= \left( \sum_{\sqrt{\tau}<m,n \leq \tau}+ \sum_{\substack{m\leq \tau\\ \frac{\tau}{m}<n \leq \tau}}+\sum_{\substack{n\leq \tau\\ \frac{\tau}{n}<m \leq \tau}} \right)\frac{\chi_1(m)\chi_2(n)}{m^{z_1}n^{z_2}} \\
&= \Sigma_{11,\tau}+\Sigma_{12,\tau}+\Sigma_{13,\tau},
\end{align*}
say. By the same argument as in $\Sigma_{4,\tau}$, we find that 
\begin{align*}
\Sigma_{11,\tau} \ll (1+|z_1|+|z_2|+|z_1z_2|) q(\log q)^2\tau^{-\frac{x_1+x_2}{2}}
\end{align*}
uniformly for $x_1>0$ and $x_2>0$.

Next, we investigate $\Sigma_{12,\tau}$. By partial summation, we have
\begin{align}
\label{Sigma12}
\begin{split}
\Sigma_{12,\tau} &= \sum_{m \leq \sqrt{\tau}} \frac{\chi_1(m)}{m^{z_1}} \sum_{\frac{\tau}{m}<n\leq \tau} \frac{\chi_2(n)}{n^{z_2}} \\
&=\sum_{m \leq \sqrt{\tau}} \frac{\chi_1(m)}{m^{z_1}} \left( \tau^{-z_2}\sum_{n\leq \tau} \chi_2(n) -\left(\frac{\tau}{m}\right)^{-z_2} \sum_{n\leq \frac{\tau}{m}} \chi_2(n) +z_2\int_{\frac{\tau}{m}}^\tau u^{-z_2-1}\sum_{n\leq u}\chi_2(n) du \right) \\
&=\tau^{-z_2}\sum_{n\leq \tau} \chi_2(n)\sum_{m \leq \sqrt{\tau}} \frac{\chi_1(m)}{m^{z_1}} -\tau^{-z_2} \sum_{m \leq \sqrt{\tau}} \frac{\chi_1(m)}{m^{z_1-z_2}} \sum_{n\leq \frac{\tau}{m}} \chi_2(n) \\
&+z_2 \sum_{m \leq \sqrt{\tau}} \frac{\chi_1(m)}{m^{z_1}} \int_{\frac{\tau}{m}}^\tau u^{-z_2-1}\sum_{n \leq u}\chi_2(n) du.
\end{split}
\end{align}
Let $\Sigma_{121,\tau}, \Sigma_{122,\tau}, \Sigma_{123,\tau}$ denote the last three terms in (\ref{Sigma12}), respectively. By the same argument as in $\Sigma_{4,\tau}$, we find that \begin{align*}
\Sigma_{121,\tau} &\ll (1+\abs{z_1})q(\log q)^2\tau^{-\frac{x_1}{2}-x_2}
\end{align*}
uniformly for $x_1>0,x_2>0$. By partial summation, we have
\begin{align*}
\Sigma_{122,\tau} &= -\tau^{-z_2} \left( \tau^{-\frac{z_1}{2}+\frac{z_2}{2}} \sum_{\substack{mn \leq \tau \\ m \leq \sqrt{\tau}}}\chi_1(m) \chi_2(n)+(z_1-z_2)\int_1^{\sqrt{\tau}} \frac{\sum_{\substack{mn \leq \tau \\ m \leq v}}\chi_1(m)\chi_2(n)}{v^{z_1-z_2+1}} dv \right) \\
&= - \tau^{-\frac{z_1+z_2}{2}} \sum_{\substack{mn \leq \tau \\ m \leq \sqrt{\tau}}}\chi_1(m) \chi_2(n)-(z_1-z_2)\tau^{z_2}\int_1^{\sqrt{\tau}} \frac{\sum_{\substack{mn \leq \tau \\ m \leq v}}\chi_1(m)\chi_2(n)}{v^{z_1-z_2+1}} dv.
\end{align*}
Then by Lemma \ref{hyperbola character sum} and the P\'{o}lya--Vinogradov inequality, we have
\begin{align*}
\Sigma_{122,\tau} &\ll \tau^{\frac{1-x_1-x_2}{2}}\sqrt{q} \log q +\abs{z_1-z_2}\sqrt{q} \log q \times \begin{cases}
\tau^{-x_2} & (x_1-x_2>1) \\
\tau^{-x_2}\log \tau & (x_1-x_2=1) \\
\tau^{\frac{1-x_1-x_2}{2}} & (x_1-x_2<1) \\
\end{cases}
\end{align*}
uniformly for $x_2>0,x_1+x_2>1$.

Finally, we estimate $\Sigma_{123,\tau}$. By the same argument as above, we have
\begin{align}
\label{Sigma_{123,tau}}
\begin{split}
\Sigma_{123,\tau} &= \tau^{-\frac{z_1}{2}}z_2 \sum_{m \leq \sqrt{\tau}} \chi_1(m) \int_{\frac{\tau}{m}}^\tau \frac{\sum_{n \leq u}\chi_2(n)}{u^{z_2+1}} du \\
&-z_1z_2 \int_1^{\sqrt{\tau}} \frac{1}{v^{z_1+1}} \sum_{m \leq v}\chi_1(m) \int_{\frac{\tau}{m}}^\tau \frac{\sum_{n \leq u} \chi_2(n)}{u^{z_2+1}} du dv.
\end{split}
\end{align}
The first term in (\ref{Sigma_{123,tau}}) can be estimated as
\begin{align*}
\tau^{-\frac{z_1}{2}}z_2 \sum_{m \leq \sqrt{\tau}} \chi_1(m) \int_{\frac{\tau}{m}}^\tau \frac{\sum_{n \leq u}\chi_2(n)}{u^{z_2+1}} du&=z_2 \tau^{-\frac{z_1}{2}} \int_{\sqrt{\tau}}^\tau \frac{\sum_{\frac{\tau}{u}<m\leq \tau}\chi_1(m) \sum_{n\leq u}\chi_2(n)}{u^{z_2+1}} du \\
&\ll\abs{z_2}q(\log q)^2\tau^{-\frac{x_1+x_2}{2}}
\end{align*}
for $x_2>0$. As for the second term in (\ref{Sigma_{123,tau}}), it can be rewritten as
\begin{align*}
&-z_1z_2 \int_1^{\sqrt{\tau}} \frac{1}{v^{z_1+1}} \int_{\frac{\tau}{v}}^\tau   \frac{\sum_{\frac{\tau}{u}<m \leq v}\chi_1(m)\sum_{n \leq u} \chi_2(n)}{u^{z_2+1}} du dv.
\end{align*}
Then the above is estimated as
\begin{align*}
&\ll \abs{z_1z_2}q(\log q)^2 \int_1^{\sqrt{\tau}} \frac{1}{v^{x_1+1}} \int_{\frac{\tau}{v}}^\tau \frac{1}{u^{x_2+1}} du dv \\
&\ll \abs{z_1z_2}q(\log q)^2 \left(\tau^{-\frac{x_1}{2}-x_2}+\tau^{-x_2}\begin{cases}
    \tau^{-x_2} & (x_1>x_2) \\
    \tau^{-x_2}\log \tau & (x_1=x_2) \\
    \tau^{-\frac{x_1+x_2}{2}} & (x_1<x_2) \\
\end{cases} \right) \\
&\ll \abs{z_1z_2}q(\log q)^2 \begin{cases}
\tau^{-x_2}\log \tau & (x_1\geq x_2) \\
\tau^{-\frac{x_1+x_2}{2}} & (x_1<x_2) \\
\end{cases}
\end{align*}
for $x_1,x_2>0$. By the symmetry, the term $\Sigma_{13,\tau}$ can be estimated by the same argument as in $\Sigma_{12,\tau}$.

Summarizing the above, for $x_1,x_2>0$ and $x_1+x_2>1$ we have
\begin{align*}
&\Sigma_{1,\tau}+\Sigma_{2,\tau}+\Sigma_{3,\tau}+\Sigma_{4,\tau} \\
&\ll \left(1+\abs{z_1}+\abs{z_2}+\abs{z_1z_2}\right)q(\log q)^2 \tau^{-\min\{x_1,x_2\}} \\
&\quad + \left( 1+\abs{z_1-z_2}\right)q^\frac{1}{2}\log q \begin{cases}
    \tau^{-x_2} & (x_1-x_2>1) \\
    \tau^{-x_2}\log \tau & (x_1-x_2=1) \\
    \tau^{\frac{1-x_1-x_2}{2}} & (x_1-x_2<1). \\
\end{cases}
\end{align*}
\end{proof}

\begin{lem}
\lab{lem:L and conf hypergeometric}
Let $(s_1,s_2,z) \in \mathbb{C}^3$ with $\sigma_1+\sigma_2>1, \re(z)+\sigma_2>1, \sigma_1+\sigma_2+\re(z)>2$ and $\sigma_2>0, \re(z)<0$. Suppose that $\chi_1, \chi_2$ are Dirichlet characters modulo $q$. If $\chi_2$ is primitive, then we have
\begin{align*}
\widetilde{L}_{2,z} (s_1,s_2;\chi_1,\chi_2) 
&= \eps(\chi_2)(2\pi)^{z+s_2-1}q^{\frac{1}{2}-z-s_2}\Gm(1-z) \sum_{m \geq 1} \sum_{n \geq 1} \frac{\chi_1(m)\overline{\chi}_2(n)}{m^{s_1}n^{1-s_2-z}}\\
&\quad \times \left\{ e^{\frac{\pi i}{2}(z+s_2+\kappa_2-1)} \Psi(s_2,s_2+z;2\pi imn/q) \right. \\
&\qquad +\left. e^{-\frac{\pi i}{2}(z+s_2+\kappa_2-1)} \Psi(s_2,s_2+z;-2\pi imn/q) \right\},
\end{align*}
where $\eps(\tau_j)=\tau(\chi)/(i^{\kappa_j}\sqrt{q})$ and $\kappa_j =0$ if $\chi_j$ is even, and $\kappa_j =1$ if $\chi_j$ is odd.
\end{lem}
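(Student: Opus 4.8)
The plan is to pass to a Mellin--Barnes representation of $\widetilde{L}_{2,z}$, insert the functional equation of $L(\cdot,\chi_2)$ inside the integral, and then identify the remaining $w$-integral with the Mellin--Barnes representation of the function $\Psi$ of (\ref{confluent hypergeometric function}). First I would separate the variables in the factor $(m+n)^{-s_2}=m^{-s_2}(1+n/m)^{-s_2}$ by the formula
\[
(1+x)^{-s_2}=\frac{1}{2\pi i}\int_{(c)}\frac{\Gm(s_2+w)\Gm(-w)}{\Gm(s_2)}\,x^{w}\,dw,
\]
applied with $x=n/m$. The sum over $n$ then produces $L(z-w,\chi_2)$ and the sum over $m$ produces $L(s_1+s_2+w,\chi_1)$, giving
\[
\widetilde{L}_{2,z}(s_1,s_2;\chi_1,\chi_2)=\frac{1}{\Gm(s_2)}\frac{1}{2\pi i}\int_{(c)}\Gm(s_2+w)\Gm(-w)\,L(s_1+s_2+w,\chi_1)\,L(z-w,\chi_2)\,dw,
\]
valid on any vertical line with $\max\{1-\sigma_1-\sigma_2,-\sigma_2\}<c<\re(z)-1$. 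This range is nonempty exactly because $\sigma_1+\sigma_2+\re(z)>2$ and $\re(z)+\sigma_2>1$, and the interchange of summation and integration is justified by absolute convergence, using Stirling's formula for the decay of $\Gm(s_2+w)\Gm(-w)$ on vertical lines.

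Since $\chi_2$ is primitive, $L(\cdot,\chi_2)$ is entire, and any pole of $L(s_1+s_2+w,\chi_1)$ (should $\chi_1$ be principal) lies to the left of the contour; hence I can shift the line of integration to the right to $\re(w)=c'$ with $\max\{\re(z),-\sigma_2,1-\sigma_1-\sigma_2\}<c'<0$ without crossing any pole. On this new line both $L(s_1+s_2+w,\chi_1)=\sum_m \chi_1(m)m^{-s_1-s_2-w}$ and $L(1-z+w,\overline{\chi}_2)=\sum_n\overline{\chi}_2(n)n^{z-1-w}$ converge absolutely. I would then insert the asymmetric functional equation of $L(z-w,\chi_2)$, which contributes the factors $\tau(\chi_2)$, $\Gm(1-z+w)$, $(2\pi)^{z-w-1}q^{w-z}$ and the exponential combination $e^{-\frac{\pi i}{2}(1-z+w)}+\chi_2(-1)e^{\frac{\pi i}{2}(1-z+w)}$, re-expand the two $L$-functions as the above Dirichlet series, and interchange the resulting double sum with the integral.

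For fixed $m,n$ the inner $w$-integral then takes the shape $\frac{1}{2\pi i}\int_{(c')}\Gm(s_2+w)\Gm(1-z+w)\Gm(-w)\,X^{-w}\,dw$, where each exponential $e^{\mp\frac{\pi i}{2}w}$ merges with $(q/2\pi mn)^{w}$ to give the argument $X=\pm 2\pi i mn/q$. Shifting this contour to the left and summing the residues at the poles $w=-s_2-k$ of $\Gm(s_2+w)$ and $w=z-1-k$ of $\Gm(1-z+w)$ reproduces precisely the two ${}_1F_1$-series in (\ref{confluent hypergeometric function}); that is, the integral equals $\Gm(s_2)\Gm(1-z)X^{s_2}\Psi(s_2,s_2+z;X)$. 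Collecting the constants via $\tau(\chi_2)=\eps(\chi_2)i^{\kappa_2}\sqrt{q}$ and $X^{s_2}=(\pm i)^{s_2}(2\pi mn/q)^{s_2}$ then reassembles the claimed identity: the factor $(mn)^{s_2}$ from $X^{s_2}$ converts $m^{-s_1-s_2}$ into $m^{-s_1}$ and $n^{-(1-z)}$ into $n^{-(1-s_2-z)}$, the powers of $2\pi$ and $q$ combine into $(2\pi)^{z+s_2-1}q^{\frac12-z-s_2}$, and the phases $e^{\mp\frac{\pi i}{2}(1-z)}$, $i^{\kappa_2}$ and $(\pm i)^{s_2}$ collapse (modulo the harmless $e^{2\pi i\kappa_2}=1$) into $e^{\pm\frac{\pi i}{2}(z+s_2+\kappa_2-1)}$.

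The main obstacle I anticipate is analytic bookkeeping rather than a single hard idea: one must justify two interchanges of summation and integration and two contour shifts by absolute-convergence estimates that play the exponential decay of the $\Gamma$-factors off against the at most polynomial growth of the $L$-functions in vertical strips, and one must track the accumulation of $2\pi$, $q$, $i^{\kappa_2}$ and trigonometric phase factors through the functional equation so that they telescope to exactly the stated form. The one step requiring genuine care is the last: establishing that closing the Mellin--Barnes contour to the left legitimately yields the series (\ref{confluent hypergeometric function}) defining $\Psi$, i.e.\ verifying the Mellin--Barnes representation of $\Psi$ in the sector $\abs{\arg X}=\pi/2$ relevant to the purely imaginary argument $X=\pm 2\pi i mn/q$.
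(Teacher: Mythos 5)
Your proposal is correct, but it is worth noting that the paper itself does not prove this lemma at all: it simply quotes it as a special case of Lemma 3.4 (2) of \cite{T22+}. What you have written is, in effect, a self-contained derivation of the cited result, and it follows the standard route for such identities (Mellin--Barnes separation of $(m+n)^{-s_2}$, insertion of the asymmetric functional equation of $L(\cdot,\chi_2)$ on a shifted line, and recognition of the inner integral as the Barnes integral for $\Psi$). I checked the key identifications and they are consistent with the statement: the admissible range $\max\{1-\si_1-\si_2,-\si_2\}<c<\re(z)-1$ is nonempty precisely under the lemma's hypotheses; the shift to $\re(w)=c'$ with $\max\{\re(z),-\si_2,1-\si_1-\si_2\}<c'<0$ crosses no poles since $\chi_2$ is non-principal and any pole of $L(s_1+s_2+w,\chi_1)$ lies at $\re(w)=1-\si_1-\si_2<c$; the inner integral $\frac{1}{2\pi i}\int_{(c')}\Gm(s_2+w)\Gm(1-z+w)\Gm(-w)X^{-w}\,dw$ with $X=\pm 2\pi imn/q$ does equal $\Gm(s_2)\Gm(1-z)X^{s_2}\Psi(s_2,s_2+z;X)$ (the contour correctly separates the two families of poles, and $\abs{\arg X}=\pi/2$ is well inside the sector $\abs{\arg X}<3\pi/2$ of validity of the Barnes representation); and the phases assemble as you say, since $i^{\kappa_2}\chi_2(-1)=i^{-\kappa_2}$ turns the second term's prefactor into $e^{-\frac{\pi i}{2}(z+s_2+\kappa_2-1)}$. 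The two points you flag as requiring care --- the absolute-convergence justifications for the interchanges and the validity of closing the Barnes contour for purely imaginary argument --- are indeed the only places where real rigor is needed, and both go through under the stated hypotheses (in the degenerate case where $s_2+z$ is an integer the two residue families coalesce, but this is handled by continuity). So your argument supplies a proof where the paper offers only a citation, and it is almost certainly the same mechanism used in the reference.
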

\begin{proof}
This is a special case of Lemma 3.4 (2) in \cite{T22+}.
\end{proof}

\begin{lem}
\lab{lem:L and conf hypergeometric-2}
Let $(s_1,s_2) \in \mathbb{C}^2$ with $\sigma_2>0, \sigma_1+\sigma_2>0$. Suppose that $\chi_1, \chi_2$ are primitive Dirichlet characters modulo $q$ and $\kappa_2$ is defined as $\chi_2(-1)=(-1)^{\kappa_2}$. If $\kappa_2=0$, then we have
\begin{align}
\begin{split}
\lab{formula:L-integral-1}
&L_2 (s_1,s_2;\chi_1,\chi_2) \\
&= 2(2\pi)^{s_2-1}q^{\frac{1}{2}-s_2} \eps(\chi_2) \sum_{m=1}^\infty \sum_{n=1}^\infty \frac{\chi_1(m) \overline{\chi_2}(n)}{m^{s_1}n^{1-s_2}}\\
&\times \left(\cos\frac{2\pi mn}{q} \int_{\frac{2\pi mn}{q}}^\infty x^{-s_2} \cos xdx +\sin\frac{2\pi mn}{q} \int_{\frac{2\pi mn}{q}}^\infty x^{-s_2} \sin xdx\right). 
\end{split}
\end{align}
Moreover if $\kappa_2=1$, then we have
\begin{align}
\begin{split}
\lab{formula:L-integral-2}
&L_2 (s_1,s_2;\chi_1,\chi_2) \\
&= 2(2\pi)^{s_2-1}q^{\frac{1}{2}-s_2} \eps(\chi_2) \sum_{m=1}^\infty\sum_{n=1}^\infty \frac{\chi_1(m) \overline{\chi_2}(n)}{m^{s_1}n^{1-s_2}}\\
& \times \left(-\sin\frac{2\pi mn}{q} \int_{\frac{2\pi mn}{q}}^\infty x^{-s_2} \cos xdx +\cos\frac{2\pi mn}{q} \int_{\frac{2\pi mn}{q}}^\infty x^{-s_2} \sin xdx\right).
\end{split}
\end{align}
\end{lem}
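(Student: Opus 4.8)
The plan is to derive both formulas by specializing the hypergeometric expansion of the interpolated series in Lemma \ref{lem:L and conf hypergeometric} to $z=0$, exploiting the relation $\widetilde{L}_{2,0}(s_1,s_2;\chi_1,\chi_2)=L_2(s_1,s_2;\chi_1,\chi_2)$, and then reducing the confluent hypergeometric factors $\Psi(s_2,s_2;\pm 2\pi i mn/q)$ to the oscillatory integrals appearing in (\ref{formula:L-integral-1}) and (\ref{formula:L-integral-2}). Since Lemma \ref{lem:L and conf hypergeometric} requires $\re(z)<0$ together with $\si_1+\si_2>1$, I would first apply it on that smaller region and then pass to $z=0$ and continue analytically. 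At $z=0$ the scalar prefactor collapses: $\Gm(1-z)\to 1$, $q^{\frac12-z-s_2}\to q^{\frac12-s_2}$, and the exponentials $e^{\pm\frac{\pi i}{2}(z+s_2+\kappa_2-1)}$ tend to $e^{\pm\frac{\pi i}{2}(s_2+\kappa_2-1)}$.

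The two analytic points needing care are the passage of the limit $z\to 0$ inside the double sum and the extension of the identity from $\si_1+\si_2>1$ to the stated range $\si_1+\si_2>0,\ \si_2>0$. I would handle both at once: the left-hand side $\widetilde{L}_{2,z}$ is entire (in both $z$ and $s_1$) for primitive $\chi_1,\chi_2$, so it suffices to show the series on the right converges locally uniformly in the target region. This I would obtain from the decay $\Psi(s_2,s_2+z;\pm 2\pi i mn/q)\ll (mn/q)^{-\si_2-\re(z)}$ combined with partial summation against the hyperbola character sums, controlled by the P\'olya--Vinogradov inequality exactly as in Lemma \ref{hyperbola character sum}; the identity theorem then transports the formula to $\si_1+\si_2>0$.

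The core computation is the evaluation of $\Psi(s_2,s_2;w)$. By (\ref{confluent hypergeometric function}), $\Psi$ is Tricomi's function $U$, so Kummer's transformation $U(a,c;w)=w^{1-c}U(a-c+1,2-c;w)$ with $a=c=s_2$ gives $\Psi(s_2,s_2;w)=w^{1-s_2}U(1,2-s_2;w)$; feeding in the incomplete-gamma representation $\Gm(b,w)=w^{b}e^{-w}U(1,b+1;w)$ at $b=1-s_2$ yields
\[
\Psi(s_2,s_2;w)=e^{w}\Gm(1-s_2,w).
\]
Writing $A=2\pi mn/q$, I would then rotate the contour of the incomplete gamma function from the imaginary axis onto the positive real axis to obtain the Fresnel-type identities $\Gm(1-s_2,-iA)=i^{-(1-s_2)}\int_{A}^{\infty}x^{-s_2}e^{ix}\,dx$ and $\Gm(1-s_2,iA)=(-i)^{-(1-s_2)}\int_{A}^{\infty}x^{-s_2}e^{-ix}\,dx$, the arcs at infinity vanishing for $\si_2>0$ by a Jordan-type estimate. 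The phase $i^{\mp(1-s_2)}=e^{\mp\frac{\pi i}{2}(1-s_2)}$ produced here cancels the $s_2$-dependent part $e^{\pm\frac{\pi i}{2}(s_2-1)}$ of the prefactor, so that only the parity factor $e^{\pm\frac{\pi i}{2}\kappa_2}$ survives.

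Finally I would insert $e^{\pm ix}=\cos x\pm i\sin x$ and collect. Setting $C=\int_{A}^{\infty}x^{-s_2}\cos x\,dx$ and $S=\int_{A}^{\infty}x^{-s_2}\sin x\,dx$, the surviving brace is $e^{\frac{\pi i}{2}\kappa_2}e^{iA}(C-iS)+e^{-\frac{\pi i}{2}\kappa_2}e^{-iA}(C+iS)$, which for $\kappa_2=0$ equals $2(\cos A\,C+\sin A\,S)$ and for $\kappa_2=1$ equals $2(-\sin A\,C+\cos A\,S)$; multiplying by the remaining prefactor $2(2\pi)^{s_2-1}q^{\frac12-s_2}\eps(\chi_2)$ (the $2$ absorbing the factor above) gives exactly (\ref{formula:L-integral-1}) and (\ref{formula:L-integral-2}). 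I expect the main obstacle to be not this algebra but the two analytic steps flagged above: justifying the contour rotation uniformly over all $m,n$ (i.e. controlling the arcs for every $A>0$), and proving the locally uniform convergence of the conditionally convergent double series that is needed both for the limit $z\to 0$ and for the analytic continuation to $\si_1+\si_2>0$.
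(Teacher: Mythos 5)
Your proposal follows essentially the same route as the paper: specialize Lemma \ref{lem:L and conf hypergeometric} to $z=0$, justify the continuation to $\sigma_1+\sigma_2>0$ by controlling the double series, apply Kummer's transformation to reduce $\Psi(s_2,s_2;\pm 2\pi imn/q)$ to $e^{\pm 2\pi imn/q}\Gamma(1-s_2,\pm 2\pi imn/q)$, and convert to the cosine/sine integrals; your phase bookkeeping and the final collection of the brace are correct. The only substantive difference is the convergence step, where the paper expands $\Psi$ asymptotically with an explicit remainder bound so that the main terms become products $L(s_1+s_2+k,\chi_1)L(k+1-z,\overline{\chi}_2)$ (convergent by non-principality) and the remainder converges absolutely --- note that your bare decay estimate $\Psi\ll(mn/q)^{-\sigma_2-\re(z)}$ overstates the true leading order $(mn/q)^{-\sigma_2}$ and, being only an upper bound, cannot by itself be fed into partial summation, so you would in practice need this same main-term-plus-error decomposition.
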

\begin{proof}
First we assume that $(s_1,s_2,z) \in \mathbb{C}^3$ with $\sigma_1+\sigma_2>1, \re(z)+\sigma_2>1, \sigma_1+\sigma_2+\re(z)>2$ and $\sigma_2>0, \re(z)<0$. Then by  the well-known formula \cite[6.5 (6)]{ErMaObTr}
\begin{equation*}
\Psi(a,c;x)= x^{1-c} \Psi(a-c+1,2-c;x)
\end{equation*}
and Lemma \ref{lem:L and conf hypergeometric} to have
\begin{align}
\label{double series}
\begin{split}
\widetilde{L}_{2,z} (s_1,s_2;\chi_1,\chi_2)&=\frac{\eps(\chi_2)\Gm(1-z)}{\sqrt{q}} \sum_{m \geq 1} \sum_{n \geq 1} \frac{\chi_1(m)\overline{\chi}_2(n)}{m^{s_1+s_2+z-1}}\\
&\quad \times \left\{ e^{\frac{\pi i}{2}\kappa_2}\Psi(1-z,2-s_2-z;2\pi imn/q) \right. \\
&\qquad +\left. e^{-\frac{\pi i}{2}\kappa_2}\Psi(1-z,2-s_2-z;-2\pi imn/q) \right\}.
\end{split}
\end{align}

The asymptotic expansion of $\Psi(a,c;x)$ (see \cite[6.13.1 (1)]{ErMaObTr}) is
\begin{equation*}
\Psi(a,c;x) = \sum_{k=0}^{N-1} \frac{(-1)^k (a)_k (a-c+1)_k}{k!}x^{-a-k} + \rho_N (a,c;x),
\end{equation*}
where $N$ is an arbitrary positive integer, $(a)_k = \Gm(a+k)/\Gm(a)$ and $\rho_N (a,c;x)$ is the remainder term. By applying the above, we find that the double series on the right-hand side of (\ref{double series}) is equal to
\begin{align}
\label{double series-2}
\begin{split}
&\sum_{m \geq 1} \sum_{n \geq 1} \frac{\chi_1(m)\overline{\chi}_2(n)}{m^{s_1+s_2+z-1}} \left\{ e^{\frac{\pi i}{2}\kappa_2}\Psi(1-z,2-s_2-z;2\pi imn/q) \right. \\
&\qquad +\left. e^{-\frac{\pi i}{2}\kappa_2}\Psi(1-z,2-s_2-z;-2\pi imn/q) \right\}\\
&= \sum_{k=0}^N \frac{(-1)^k (1-z)_k(s_2)_k}{\frac{(2\pi}{q})^{k+1-z} k!} \\
&\qquad \times \left( e^{\frac{\pi i}{2}(z-k-1+\kappa_2)}+e^{-\frac{\pi i}{2}(z-k-1+\kappa_2)} \right) L(s_1+s_2+k,\chi_1)L(k+1-z,\overline{\chi}_2)\\
&\quad+\sum_{m \geq 1} \sum_{n \geq 1} \frac{\chi_1(m)\overline{\chi}_2(n)}{m^{s_1+s_2+z-1}} \left\{ e^{\frac{\pi i}{2}\kappa_2} \rho_N(1-z,2-s_2-z;2\pi imn/q) \right. \\
&\qquad +\left. e^{-\frac{\pi i}{2}\kappa_2}\rho_N(1-z,2-s_2-z;-2\pi imn/q) \right\}.
\end{split}
\end{align}
By applying the estimate (\cite[(6.2)]{Ma98})
\begin{align*}
&\lvert \rho_N (1-z, 2-s_2-z; \pm 2\pi imn/q) \rvert  \\
&\ll \frac{\lvert (s_2)_k \rvert \Gm(-\re(z)+N+1)}{N!\lvert\Gm(1-z)\rvert} e^{\pi\frac{\lvert \im(z) \rvert+\lvert t_2 \rvert}{2}} (2\pi mn/q)^{\re(z)-N-1},
\end{align*}
where $\re(z)<N+1$ and $\si_2\geq -N$, we find that the second term on the right-hand side of (\ref{double series-2}) is estimated by
\begin{align*}
&\ll q^{N-\re(z)}\frac{\lvert\tau(\chi_2) (s_2)_k \rvert \Gm(-\re(z)+N+1)}{N!} e^{\pi\frac{\lvert \im(z) \rvert+\lvert t_2 \rvert}{2}} \zeta(\sigma_1+\sigma_2+N) \zeta(1-\re(z)+N)
\end{align*}
for $\re(z)<N$ and $\sigma_1+\sigma_2\geq 1-N$. Thus the second term in (\ref{double series-2}) is absolutely convergent in the region. 

Now we take $N \geq 2$. Since $\chi_1,\chi_2$ are primitive, the first term in (\ref{double series-2}) is convergent when $z<1$ and $\sigma_1+\sigma_2>0$. Therefore the double series on the right-hand side of (\ref{double series}) is uniformly convergent for $\sigma_1+\sigma_2>0,z<1$. Taking $z=0$, if $\sigma_1+\sigma_2>0$ we obtain 
\begin{align}
\label{L_2-conf hypergeo}
\begin{split}
L_2 (s_1,s_2;\chi_1,\chi_2)&=\widetilde{L}_{2,0}(s_1,s_2;\chi_1,\chi_2) \\
&= \frac{\eps(\chi_2)}{\sqrt{q}} \sum_{m \geq 1} \sum_{n \geq 1} \frac{\chi_1(m)\overline{\chi}_2(n)}{m^{s_1+s_2-1}}\\
&\quad \times \left\{ e^{\frac{\pi i}{2}\kappa_2}\Psi(1,2-s_2;2\pi imn/q)+e^{-\frac{\pi i}{2}\kappa_2}\Psi(1,2-s_2;-2\pi imn/q) \right\}.
\end{split}
\end{align}

Now we invoke the classical relation
\[
z^a e^{-z} \Psi (1,a+1;z)=\Gamma(a,z)
\]
(see \cite[6.5.6(2)]{ErMaObTr} and \cite[6.9.2(21)]{ErMaObTr}), where $\Gm(a,z)$ denotes the incomplete Gamma function of the second kind defined by
\[
\Gm(a,z)=\int_z^\infty e^{-t}t^{a-1} dt,
\]
for $\re(a)>0$. Then we have 
\begin{align}
\label{conf hypergeo-incomplete gamma}
\begin{split}
\Psi(1, 2-s_2; \pm2\pi imn/q) &= \left( \pm\frac{2\pi imn}{q}\right)^{s_2-1} e^{\pm\frac{2\pi imn}{q
}}\Gamma\left(1-s_2,\pm \frac{2\pi imn}{q}\right).
\end{split}
\end{align}
Moreover by \cite[9.10.(1),(2)]{ErMaObTr2}, we have
\begin{align}
\label{incomplete gamma-integral}
\begin{split}
e^{-\frac{\pi i}{2}(1-s_2)} \Gamma\left(1-s_2, \frac{2\pi imn}{q}\right) &= \int_{\frac{2\pi mn}{q}}^\infty t^{-s_2} \cos tdt-i\int_{\frac{2\pi mn}{q}}^\infty t^{-s_2} \sin tdt, \\
e^{\frac{\pi i}{2}(1-s_2)} \Gamma\left(1-s_2,- \frac{2\pi imn}{q}\right) &= \int_{\frac{2\pi mn}{q}}^\infty t^{-s_2} \cos tdt+i\int_{\frac{2\pi mn}{q}}^\infty t^{-s_2} \sin tdt
\end{split}
\end{align}
for $\sigma_2>0$.

Combining the (\ref{L_2-conf hypergeo}), (\ref{conf hypergeo-incomplete gamma}) and (\ref{incomplete gamma-integral}), we obtain the desired result. Finally, it should be remarked that these integral expression can be justified by integration by parts.
\end{proof}

\section{Proof of the Theorem}
Now we prove Theorem \ref{thm:2}. In the following proof, the evaluation of $J_1,J_2,J_3,J_4$ (defined below) is the same as in the proof of Theorem 2 in \cite{KTZ11}, While we need Lemma \ref{approximation formula} to evaluate $J_5$ in the odd character case. In order to prove Theorem \ref{thm:2}, we apply the following estimates due to Hardy and Littlewood. 
\begin{lem}\cite[Lemma 12]{HL}
Suppose that $\sigma$ is fixed, $0<\sigma<1$, and $A_0,A_1$ are fixed constants such that $0<A_0<1$ and $1<A_1$ respectively. Consider the integral
\[
I(\xi,s) = \int_\xi^\infty u^{-s} \begin{cases}
\cos u \\
\sin u 
\end{cases} du.
\]
Then for $s=\sigma+it$, we have
\begin{align}
\lab{HL-1}
I(\xi,s) &= \Gm(1-s) \begin{cases}
\sin \frac{\pi s}{2} \\
\cos \frac{\pi s}{2}
\end{cases} +O\left(\frac{\xi^{1-\sigma}}{|t|} \right) & (\xi<A_0 |t|<|t|), \\
\lab{HL-2}
I(\xi,s) &= \Gm(1-s) \begin{cases}
\sin \frac{\pi s}{2} \\
\cos \frac{\pi s}{2}
\end{cases} +O\left(\frac{\xi^{2-\sigma}}{|t|(|t|-\xi)} \right) & (A_0 |t|<\xi<|t|), \\
\lab{HL-3}
I(\xi,s) &= O\left(\frac{\xi^{1-\sigma}}{\xi-|t|} \right) & (|t|<\xi<A_1 |t|), \\
\lab{HL-4}
I(\xi,s) &= O\left(\xi^{1-\sigma} \right) & (|t|<A_1|t|<\xi), \\
\intertext{and}
\lab{HL-5}
I(\xi,s) &= O\left(\xi^{1-\sigma} |t|^\frac{1}{2}\right)
\end{align}
in any case.
\end{lem}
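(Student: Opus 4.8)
The plan is to reduce all five estimates to the analysis of the exponential integral $J_\pm(\xi,s):=\int_\xi^\infty u^{-\sigma}e^{i(\pm u-t\log u)}\,du$, obtained by writing $\cos u,\sin u$ as combinations of $e^{\pm iu}$ and $u^{-s}=u^{-\sigma}e^{-it\log u}$. The phase $\phi_\pm(u)=\pm u-t\log u$ has $\phi_\pm'(u)=\pm1-t/u$, which vanishes on $(0,\infty)$ only for the sign equal to $\operatorname{sgn}(t)$ and only at the single stationary point $u=\abs{t}$; the other, \emph{non-resonant}, exponential satisfies $\abs{\phi'}\ge1$ everywhere. Thus the whole lemma is governed by the position of $\xi$ relative to $\abs{t}$. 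I would first record the main term: for $0<\sigma<1$ the complete integrals are evaluated by rotating the ray $[0,\infty)$ onto the imaginary axis (the arc at infinity being discarded by Jordan's lemma), giving $\int_0^\infty u^{-s}e^{\pm iu}\,du=\Gamma(1-s)e^{\pm i\pi(1-s)/2}$ and hence $\int_0^\infty u^{-s}\cos u\,du=\Gamma(1-s)\sin\frac{\pi s}{2}$ and $\int_0^\infty u^{-s}\sin u\,du=\Gamma(1-s)\cos\frac{\pi s}{2}$. Writing $I(\xi,s)=\big(\int_0^\infty-\int_0^\xi\big)u^{-s}\cos u\,du$ then isolates the claimed main term in (\ref{HL-1}) and (\ref{HL-2}).

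The workhorse for (\ref{HL-1})--(\ref{HL-4}) is a single integration by parts: writing $e^{i\phi_\pm}=(i\phi_\pm')^{-1}\frac{d}{du}e^{i\phi_\pm}$ and differentiating the amplitude $u^{-\sigma}/(i\phi_\pm')$, which for the resonant sign equals $u^{1-\sigma}/\big(i(u-\abs{t})\big)$. For $\xi<\abs{t}$ I apply this to $\int_0^\xi$ (the lower endpoint contributes nothing, as $1-\sigma>0$), producing a boundary term at $u=\xi$ of order $\xi^{1-\sigma}/(\abs{t}-\xi)$ together with a remainder of the same order. This equals $\xi^{1-\sigma}/\abs{t}$ when $\xi<A_0\abs{t}$ and is comparable to $\xi^{2-\sigma}/\big(\abs{t}(\abs{t}-\xi)\big)$ when $A_0\abs{t}<\xi<\abs{t}$ (where $\xi\asymp\abs{t}$), giving (\ref{HL-1}) and (\ref{HL-2}); the non-resonant sign contributes only $\xi^{1-\sigma}/(\xi+\abs{t})$ and is negligible. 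For $\xi>\abs{t}$ the stationary point lies outside $[\xi,\infty)$, so the same integration by parts applied directly to $I(\xi,s)=\int_\xi^\infty$ yields $\xi^{1-\sigma}/(\xi-\abs{t})$, which is (\ref{HL-3}); and once $\xi>A_1\abs{t}$ forces $\xi-\abs{t}\gtrsim\xi$, it collapses to $\xi^{-\sigma}\ll\xi^{1-\sigma}$, which is (\ref{HL-4}).

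The main obstacle is the uniform bound (\ref{HL-5}), which must hold precisely in the transition zone $\xi\asymp\abs{t}$ where every denominator $\abs{\xi-\abs{t}}$ above degenerates and integration by parts breaks down. Here I would replace the first-derivative argument by the second-derivative (van der Corput) test on the resonant piece: on $[\xi,2\abs{t}]$ one has $\phi_\pm''(u)=t/u^2\ge 1/(4\abs{t})$, so the second-derivative estimate, combined with the second mean value theorem to strip off the monotone weight $u^{-\sigma}$, gives $\int_\xi^{2\abs{t}}u^{-\sigma}e^{i\phi_\pm}\,du\ll\xi^{-\sigma}\abs{t}^{1/2}$; the tail $[2\abs{t},\infty)$, where $\abs{\phi'}\ge1/2$, is handled by parts and contributes $\ll\abs{t}^{-\sigma}$, while the non-resonant piece is again $O(\xi^{-\sigma})$. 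This yields $I(\xi,s)\ll\xi^{-\sigma}\abs{t}^{1/2}$ for every $\xi$, hence (\ref{HL-5}). The only real care is uniformity in $\sigma$ and checking that each remainder integral arising from an integration by parts is dominated by its boundary term on the relevant subinterval; these are routine once the phase data $\phi_\pm'=\pm1-t/u$, $\phi_\pm''=t/u^2$ and the stationary point $u=\abs{t}$ are in hand, so the entire lemma rests on organizing the five regimes around that single point.
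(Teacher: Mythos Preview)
The paper does not supply its own proof of this lemma: it is quoted verbatim as Lemma 12 of Hardy--Littlewood \cite{HL} and used as a black box in the proof of Theorem~\ref{thm:2}. So there is no in-paper argument to compare against.

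That said, your outline is the standard proof and is essentially what Hardy and Littlewood do. Splitting $\cos u$, $\sin u$ into $e^{\pm iu}$ and tracking the single stationary point $u=\abs{t}$ of the resonant phase $\phi(u)=\pm u - t\log u$ is exactly the right organisation; the first-derivative/integration-by-parts argument correctly produces the boundary term $\xi^{1-\sigma}/\abs{\abs{t}-\xi}$, which specialises to (\ref{HL-1})--(\ref{HL-4}) in the four regimes you describe, and the contour rotation for the complete integral is the right way to isolate the main term $\Gamma(1-s)\sin\frac{\pi s}{2}$ (resp.\ $\cos\frac{\pi s}{2}$). For (\ref{HL-5}) your use of the second-derivative (van der Corput) bound on the transition window, together with the second mean value theorem to remove the monotone weight $u^{-\sigma}$, is again the standard device and gives the claimed $O(\xi^{-\sigma}\abs{t}^{1/2})$, which is at least as strong as the stated $O(\xi^{1-\sigma}\abs{t}^{1/2})$ once $\xi\ge 1$. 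One small point worth making explicit in a full write-up: the phrase ``in any case'' in (\ref{HL-5}) should be read with $\xi$ bounded away from $0$ (in the paper's application $\xi=2\pi mn/q$ and only the range $\xi\asymp\abs{t}$ is used), since $I(\xi,s)\to\Gamma(1-s)\sin\frac{\pi s}{2}\asymp\abs{t}^{1/2-\sigma}$ as $\xi\to 0$ while $\xi^{1-\sigma}\abs{t}^{1/2}\to 0$. This is a feature of the lemma as stated, not of your argument.
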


\begin{proof}[Proof of Theorem \ref{thm:2}]
We may suppose that $t_2 > 0$. Let $\tau$ be a parameter such that $1 \ll \tau \ll (qt_2)^\frac{1}{2}$. We divide the double sum in (\ref{formula:L-integral-1}) and (\ref{formula:L-integral-2}) into the following five parts:
\begin{align*}
&\sum_{mn\leq \frac{A_0 qt_2}{2\pi}} +\sum_{\frac{A_0 qt_2}{2\pi}<mn\leq \frac{qt_2}{2\pi} -\tau} +\sum_{\frac{qt_2}{2\pi}-\tau<mn\leq \frac{qt_2}{2\pi} +\tau} +\sum_{\frac{qt_2}{2\pi}+\tau<mn\leq \frac{A_1 qt_2}{2\pi}} +\sum_{\frac{A_1 qt_2}{2\pi}<mn} \\
&= J_1+J_2+J_3+J_4+J_5,
\end{align*}
say. We first assume that $\kappa_2=0$. By the same argument as in \cite{KTZ11}, we have
\begin{align*}
J_1&=2\eps(\chi_2)(2\pi)^{s_2-1}q^{\frac{1}{2}-s_2} \sum_{mn\leq \frac{A_0 qt_2}{2\pi}} \frac{\chi_1(m) \overline{\chi_2}(n)}{m^{s_1}n^{1-s_2}} \\
&\quad \times \left(\Gm(1-s_2)\left(\cos\frac{2\pi mn}{q}\sin \frac{\pi s_2}{2}+\sin\frac{2\pi mn}{q}\cos \frac{\pi s_2}{2} \right) + O \left(\frac{1}{t_2}\left(\frac{mn}{q}\right)^{1-\sigma_2}\right) \right) \\
&=2\eps(\chi_2)(2\pi)^{s_2-1}q^{\frac{1}{2}-s_2}\Gm(1-s_2) \left( \sin \frac{\pi s_2}{2}\sum_{mn\leq \frac{A_0 qt_2}{2\pi}} \frac{\chi_1(m) \overline{\chi_2}(n)\cos\frac{2\pi mn}{q}}{m^{s_1}n^{1-s_2}} \right. \\
&\quad \left.+\cos \frac{\pi s_2}{2}\sum_{mn\leq \frac{A_0 qt_2}{2\pi}} \frac{\chi_1(m) \overline{\chi_2}(n)\sin\frac{2\pi mn}{q} }{m^{s_1}n^{1-s_2}} \right) + O \left( q^\frac{1}{2}(qt_2)^{\delta+\eps}\right),
\end{align*}
where $\delta = \max \{ 0, 1-\sigma_1-\sigma_2 \}$. As for $J_2$, we obtain
\begin{align*}
J_2&=2\eps(\chi_2)(2\pi)^{s_2-1}q^{\frac{1}{2}-s_2}\Gm(1-s_2) \\
&\times \left( \sin \frac{\pi s_2}{2}\sum_{mn\leq \frac{A_0 qt_2}{2\pi}} \frac{\chi_1(m) \overline{\chi_2}(n)\cos\frac{2\pi mn}{q}}{m^{s_1}n^{1-s_2}} +\cos \frac{\pi s_2}{2}\sum_{mn\leq \frac{A_0 qt_2}{2\pi}} \frac{\chi_1(m) \overline{\chi_2}(n)\sin\frac{2\pi mn}{q} }{m^{s_1}n^{1-s_2}} \right) \\
&\quad + O\left(\frac{1}{t_2q^{\frac{3}{2}}} \sum_{\frac{A_0 qt_2}{2\pi}<\ell\leq \frac{qt_2}{2\pi}-\tau} \frac{|\sigma_{1-s_1-s_2}(\ell)| \ell}{t_2-\frac{2\pi \ell}{q}} \right),
\end{align*}
where $\sigma_\alpha (n) = \sum_{d \mid n} d^\alpha$. The $O$-term above is estimated as
\begin{align*}
\ll q^{-\frac{1}{2}} (qt)_2^{\delta+\varepsilon} \sum_{1 \leq j \ll qt_2}\frac{1}{\tau+j} \ll  q^{-\frac{1}{2}}(qt_2)^{\delta+\varepsilon}.
\end{align*}
By (\ref{HL-3}) and by the same method as in the case of $J_2$, we have 
\[
J_4 = O \left(\  q^{-\frac{1}{2}}(qt_2)^{\delta+\varepsilon} \right).
\]
By the same calculations as in \cite{KTZ11}, we estimate $J_3$ as
\[
J_3 \ll (qt_2)^\frac{1}{2}\sum_{|\ell -\frac{qt_2}{2\pi}| \leq \tau} \frac{|\sigma_{1-s_1-s_2}(\ell)|}{\ell} \ll (qt_2)^{\delta-\frac{1}{2}+\eps} \tau \ll q^{-\frac{1}{2}}(qt_2)^{\delta+\varepsilon}.
\]

Next, we estimate $J_5$. We apply
\begin{align*}
\int_{\frac{2\pi mn}{q}}^\infty x^{-s_2} \cos xdx&=-\left(\frac{2\pi mn}{q}\right)^{-s_2}\sin\frac{2\pi mn}{q} +O\left( t_2 \left(\frac{mn}{q}\right)^{-\sigma_2-1}\right), \\
\int_{\frac{2\pi mn}{q}}^\infty x^{-s_2} \sin xdx&=\left(\frac{2\pi mn}{q}\right)^{-s_2}\cos\frac{2\pi mn}{q} +O\left( t_2 \left(\frac{mn}{q}\right)^{-\sigma_2-1}\right)
\end{align*}
to obtain
\begin{align*}
J_5 &\ll  q^\frac{1}{2} t_2 \sum_{\ell>\frac{A_1 qt_2}{2\pi}} \frac{|\sigma_{1-s_1-s_2}(\ell)|}{\ell^2} \ll q^{-\frac{1}{2}}(qt_2)^{\delta+\eps}.
\end{align*}

Lastly, we note that 
\[
\left| \eps(\chi_2)(2\pi)^{s_2-1}q^{\frac{1}{2}-s_2}\Gm(1-s_2)\left( \sin \frac{\pi s_2}{2}+\cos\frac{\pi s_2}{2}\right)\right| \sum_{\frac{qt_2}{2\pi}-\tau <\ell <\frac{qt_2}{2\pi}} \frac{|\sigma_{1-s_1-s_2}(\ell)|}{\ell^{1-\sigma_2}}\ll (qt_2)^{\delta+\eps}.
\]
Therefore, we get the desired result.

Next, we assume that $\kappa_2=1$. We can estimate $J_1,J_2,J_3,J_4$ by the same argument as above. As for $J_5$, by Lemma \ref{approximation formula} with $z_1=1$ and $z_2=s_1+s_2$, we have
\begin{align}
\label{J_5 chi_2:odd}
\begin{split}
J_5 &=\frac{2\eps(\chi_2)q^{\frac{1}{2}}}{2\pi} \sum_{mn>\frac{A_1qt_2}{2\pi}} \frac{\chi_1(m)\overline{\chi_2}(n)}{m^{s_1+s_2}n} +O \left(q^\frac{1}{2}t_2\sum_{\ell>\frac{A_1 qt_2}{2\pi}} \frac{|\sigma_{1-s_1-s_2}(\ell)|}{\ell^2} \right) \\
&\ll(1+\abs{t_1+t_2}) q^{\frac{3}{2}+\eps} (qt_2)^{-\min \{1,\si_1+\si_2\}} \log (qt_2) \\
&\quad+(1+\abs{t_1+t_2}) q^{1+\eps} \begin{cases}
    (qt_2)^{-1} & (\si_1+\si_2>2) \\
    (qt_2)^{-1}\log(qt_2) & (\si_1+\si_2=2) \\
    (qt_2)^{-\frac{\si_1+\si_2}{2}} & (\si_1+\si_2<2) \\
\end{cases} \\
&\quad +q^{-\frac{1}{2}}(qt_2)^{\delta+\eps} \\
&\ll (1+\abs{t_1+t_2})q^{\frac{3}{2}+\eps}(qt_2)^{-\min \{1,\frac{\si_1+\si_2}{2} \}+\eps}+q^{-\frac{1}{2}}(qt_2)^{\delta+\eps}
\end{split}
\end{align}
for $\si_1+\si_2>0$.
\end{proof}

\begin{ack} 
The author would like to thank Professor Kohji Matsumoto for valuable comments. This work is supported by Nagoya University Interdisciplinary Frontier Fellowship (Grant Number JPMJFS2120).
\end{ack} 

\bibliographystyle{plain}

\end{document}